\newtheorem{theorem}{Theorem}
\newtheorem{lemma}[theorem]{Lemma}
\newtheorem{claim}{Claim}
\newtheorem{proposition}[theorem]{Proposition}
\newenvironment{claimproof}[1]{\par\noindent\emph{Proof.}\space#1}{\hfill $\Diamond$\newline}
\newcommand*{\N}{\mathbb{N}}
\newcommand*{\disc}{\rm disc}
\newcommand*{\herdisc}{\rm{herdisc}}
\newcommand*{\lindisc}{\rm{lindisc}}
\newcommand*{\herlindisc}{\rm{herlindisc}}
\title{On a Ramsey-type problem of Erd\H{o}s and Pach}
\author{Ross J. Kang\thanks{Radboud University Nijmegen. Email: \url{ross.kang@gmail.com}.} \and Eoin Long\thanks{Tel Aviv University. Email: \url{eoinlong@post.tau.ac.il}}\and  Viresh Patel\thanks{University of Amsterdam. Email: \url{viresh.s.patel@gmail.com}. Supported by the Queen Mary - Warwick Strategic Alliance and the Netherlands Organisation for Scientific Research (NWO) through the Gravitation Programme Networks (024.002.003).} \and Guus Regts\thanks{University of Amsterdam. Email: \url{guusregts@gmail.com}. The research leading to these results has received funding from the European Research Council
under the European Union's Seventh Framework Programme (FP7/2007-2013) / ERC grant agreement
n$\mbox{}^{\circ}$ 339109. This author was supported by a NWO Veni grant.} }
\begin{document}
\maketitle
\begin{abstract}
In this paper we show that there exists a constant $C>0$ such that for any graph $G$ on $Ck\ln k$ vertices either $G$ or its complement $\overline{G}$ has an induced subgraph on $k$ vertices with minimum degree at least $\frac12(k-1)$. 
This affirmatively answers a question of Erd\H{o}s and Pach from 1983. 
\begin{footnotesize}

\quad 

Keywords: Ramsey theory, quasi-Ramsey numbers, graph discrepancy, probabilistic method.

MSC: 05C55 (Primary) 05D10, 05D40 (Secondary).
\end{footnotesize}
\end{abstract}

\section{Introduction}

Recall that the (diagonal, two-colour) Ramsey number is defined to be the smallest integer $R(k)$ for which any graph on $R(k)$ vertices is guaranteed to contain a homogeneous set of order $k$ --- that is, a set of $k$ vertices corresponding to either a complete or independent subgraph.
The search for better bounds on $R(k)$, particularly asymptotic bounds as $k\to\infty$, is a challenging topic that has long played a central role in combinatorial mathematics (see~\cite{GRS90,Pro13}).

We are interested in a degree-based generalisation of $R(k)$ where, rather than seeking a clique or coclique of order $k$, we seek instead an induced subgraph of order (at least) $k$ with high minimum degree (clique-like graphs) or low maximum degree (coclique-like graphs). Erd\H{o}s and Pach~\cite{ErPa83} introduced this class of problems in 1983 and called them {\em quasi-Ramsey problems}. 
By gradually relaxing the
degree requirement, a spectrum of Ramsey-type problems arise, and Erd\H{o}s and Pach showed that this spectrum exhibits a sharp change in behaviour at a certain point. Naturally, this point corresponds to a degree requirement of half the order of the subgraph sought.
Three of the authors recently revisited this topic together with Pach~\cite{KPPR14+}, and refined our understanding of the threshold for mainly what is referred to in~\cite{KPPR14+} as the {\em variable quasi-Ramsey numbers} (corresponding to the parenthetical `at least' above). In the present paper we focus on the harder version of this problem, the determination of what is called the {\em fixed quasi-Ramsey numbers} (where `exactly' is implicit instead of `at least' above).

Using a result on graph discrepancy, Erd\H{o}s and Pach \cite{ErPa83} proved that there is a constant $C>0$ such that for any graph $G$ on at least $Ck^2$ vertices either $G$ or its complement $\overline{G}$ has an induced subgraph on $k$ vertices with minimum degree at least $\frac12(k-1)$. With an unusual random graph construction, they also showed that the previous statement does not hold with $C'k\ln k/\ln\ln k$ in place of $Ck^2$ for some constant $C'>0$. They asked if it holds instead with $Ck\ln k$. (This was motivated perhaps by the fact that this bound holds for the corresponding variable quasi-Ramsey numbers.)
Our main contribution here is to confirm this, by showing the following.

\begin{theorem}\label{thm:main}
There exists a constant $C>0$ 
such that for any graph $G$ on $Ck\ln k$ vertices, either $G$ or its complement $\overline{G}$ has an induced subgraph on $k$ vertices with minimum degree at least $\frac12 (k-1)$.
\end{theorem}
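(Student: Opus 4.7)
The plan is to combine a structural argument (finding a vertex subset on which $G$ or $\overline{G}$ has degrees skewed away from half the subset size) with a probabilistic argument (extracting exactly $k$ vertices while preserving the degree condition). Throughout, set $n = Ck\ln k$ and, by replacing $G$ with $\overline{G}$ if necessary, assume $G$ has at least $\frac12\binom{n}{2}$ edges.

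\textbf{Step 1 (biased subset).} I would first aim to produce a subset $A \subseteq V(G)$ of some size $m$ with $k \le m \le n$ such that every vertex of $A$ has degree in $G[A]$ at least $m/2 + \eta$, where the bias satisfies $\eta = \Theta\!\bigl(m\sqrt{\ln k / k}\bigr)$. Existence of such an $A$ should follow from an iterative discrepancy-type argument initialized at $A = V(G)$. At each stage, if the current $A$ is not already sufficiently biased, then a substantial fraction of its vertices are ``close to balanced'' (degree within $\eta$ of $|A|/2$), and these can be peeled off; an Erd\H{o}s--Spencer-type discrepancy estimate then implies that after restricting to the remaining vertices, the density on one side (say $G[A]$ versus $\overline{G}[A]$) has shifted. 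The role of the starting size $Ck\ln k$ is to absorb $\Theta(\ln k)$ rounds of pruning while leaving at least $k$ vertices and accumulating enough bias.

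\textbf{Step 2 (random $k$-subset).} Given $A$ as above, pick $S$ uniformly at random from the $k$-subsets of $A$. For each $v \in A$, conditional on $v \in S$, the degree $d_{G[S]}(v)$ is hypergeometrically distributed with mean at least $(k-1)\bigl(\tfrac12 + \eta/m\bigr)$. A standard Chernoff bound for the hypergeometric distribution gives
\[
\Pr\!\left(d_{G[S]}(v) < \tfrac{k-1}{2} \,\Big|\, v \in S\right) \;\le\; \exp\!\bigl(-c\,\eta^2 k / m^2\bigr),
\]
which by the choice of $\eta$ is at most $k^{-3}$. Since $\Pr(v \in S) = k/m$, summing over $v \in A$ bounds the probability that some vertex of $S$ has internal degree less than $(k-1)/2$ by at most $k^{-2}$. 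Hence there exists a choice of $S$ that witnesses the theorem.

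\textbf{Main obstacle.} The essential difficulty is Step 1. The discrepancy bound used by Erd\H{o}s and Pach in \cite{ErPa83} provides a bias of only $\Omega(\sqrt{m})$ inside a subset of size $m$, which is short of the requirement above by a factor of $\sqrt{\ln k}$; this is precisely the reason their method yields only the quadratic bound $Ck^2$. Extracting the logarithmic improvement down to $Ck\ln k$ demands a more careful, multi-round peeling procedure in which the bias is amplified across $\Theta(\ln k)$ stages, with the two free parameters $m$ and $\eta$ traded off against each other. I expect that the heart of the proof is an amplification lemma guaranteeing that each pruning stage either returns a set with the desired uniform bias or reduces $|A|$ by only a small constant factor while increasing the density shift --- so that after $\Theta(\ln k)$ stages the required $A$ is produced.
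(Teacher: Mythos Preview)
Your two-step plan is natural, but the amplification you hope for in Step~1 is provably unattainable. You ask for a subset $A$ of size $m \in [k, Ck\ln k]$ in which every vertex has degree at least $m/2 + \eta$ with $\eta = \Theta\bigl(m\sqrt{(\ln k)/k}\bigr)$; already for $m$ of order $k$ this means a uniform bias of order $\sqrt{k\ln k}$. However, a routine random-graph computation (Proposition~\ref{prop:standard}) shows that there are graphs on $Ck\ln k$ vertices in which \emph{every} induced subgraph of $G$ or of $\overline G$ on $\ell \ge k$ vertices has minimum degree below $\tfrac12(\ell-1) + O(\sqrt{\ell\ln\ln\ell})$. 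Thus no peeling scheme, however clever, can manufacture the bias your Chernoff-plus-union-bound argument in Step~2 demands; you are short by a factor of roughly $\sqrt{(\ln k)/\ln\ln k}$, and this is a genuine barrier, not a technicality.

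The paper fixes the problem at Step~2 rather than Step~1. It settles for the modest bias $\nu\sqrt{\ell}$ with $\nu$ a large constant (Theorem~\ref{thm:KPPR}), and then passes from $\ell$ down to exactly $k$ deterministically. The point is that your random $k$-subset is, in effect, a random $\{0,1\}$-colouring of the $\ell$ neighbourhood sets, and the $\sqrt{\ln k}$ loss you incur is precisely the union-bound overhead of random colouring. Spencer's six-standard-deviations theorem beats this: $n$ sets on $n$ points admit a colouring with discrepancy $O(\sqrt{n})$ rather than $O(\sqrt{n\ln n})$, and via linear/hereditary discrepancy (Lemmas~\ref{lem:cor spen} and~\ref{lem:thinning exact}) this yields a $k$-subset in which every neighbourhood deviates from its expectation by only $O(\sqrt{\ell})$. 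Together with a greedy swap routine (Lemma~\ref{lem:max cut}) that controls the ratio $\ell/k$ through $O(\log\log k)$ halvings, the constant-multiple bias $\nu\sqrt{\ell}$ survives all the way down to $k$. In short, the missing idea is not an amplification lemma for Step~1 but the replacement of random sampling by Spencer's discrepancy bound in Step~2.
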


Although it is short, our proof of Theorem~\ref{thm:main} has a number of different ingredients, including the use of graph discrepancy in Section~\ref{sec:aux}, an application of the celebrated `six standard deviations' result of Spencer \cite{Spen} in Section~\ref{sec:disc} and a greedy algorithm in Section~\ref{sec:greedy} that was inspired by similar procedures for max-cut and min-bisection.
It is interesting to remark that the two discrepancy results we use are of a different nature; the one in Section~\ref{sec:aux} is an anti-concentration result while the result of Spencer is a concentration result.

\section{An auxiliary result via graph discrepancy}\label{sec:aux}

Our first step in proving Theorem~\ref{thm:main} will be to apply the following result. This is a bound on a variable quasi-Ramsey number which is similar to Theorem~3(a) in~\cite{KPPR14+}. The idea of the proof of this auxiliary result is inspired by the sketch argument for Theorem~2 in~\cite{ErPa83}, in spite of the error contained in that sketch (cf.~\cite{KPPR14+}).
\begin{theorem}\label{thm:KPPR}
For any constant $\nu\ge 0$, there exists a constant $C = C(\nu) > 1$ such that for any graph $G$ on $C k\ln k$ vertices, $G$ or its complement $\overline{G}$ has an induced subgraph on $\ell\ge k$ vertices with minimum degree at least $\frac12(\ell-1)+\nu \sqrt{\ell-1}$.
\end{theorem}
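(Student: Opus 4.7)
I would prove Theorem~\ref{thm:KPPR} by contradiction, combining a greedy vertex-deletion argument with a graph discrepancy result of anti-concentration type. Assume that neither $G$ nor $\overline{G}$ contains an induced subgraph on $\ell\ge k$ vertices of minimum degree at least $f(\ell):=\tfrac12(\ell-1)+\nu\sqrt{\ell-1}$. On each of $G$ and $\overline{G}$ run the natural peeling process: while there is a vertex $v$ whose current degree is less than $f(|V|)$, delete $v$. By assumption both processes terminate with fewer than $k$ vertices; accumulating the deleted vertices' degrees yields
\[
 e(G),\; e(\overline{G}) \;\le\; \binom{k-1}{2} + \sum_{\ell=k}^n f(\ell) \;=\; \tfrac12\binom{n}{2} + \tfrac12\binom{k-1}{2} + \tfrac{2\nu}{3}\,n^{3/2} + o(n^{3/2}).
\]
Adding these inequalities and using $e(G)+e(\overline{G})=\binom{n}{2}$ collapses to the tautology $0\le\binom{k-1}{2}+O(\nu n^{3/2})$. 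Hence greedy peeling alone is insufficient, and something stronger must be gained on at least one side.

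This is where graph discrepancy enters as an anti-concentration statement. A classical result of Erd\H{o}s--Spencer type asserts that for any graph $G$ on $n$ vertices there exists a subset $T\subseteq V(G)$ with $|2e(G[T])-\binom{|T|}{2}|$ of order a positive power of $n$, so that (swapping $G$ and $\overline{G}$ if needed) one has $e(G[T])\ge \tfrac12\binom{|T|}{2}+c\,|T|^{3/2}$ for some absolute $c>0$. Running the greedy peeling inside $G[T]$ rather than $G$ and comparing the discrepancy lower bound with the peeling upper bound cancels the dominant quadratic terms, leaving an inequality of the shape
\[
 \bigl(c - \tfrac{2\nu}{3}\bigr)\,|T|^{3/2} \;\le\; O(k^{2}).
\]
When $c>\tfrac{2\nu}{3}$, this already contradicts $|T|$ being sufficiently large. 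For arbitrary $\nu$ one iterates the discrepancy step $O_\nu(1)$ times inside successively smaller subsets, relabelling $G\leftrightarrow\overline{G}$ at each round so as to always restrict to the denser side, thereby accumulating a large enough surplus to overwhelm $\tfrac{2\nu}{3}$. Since every round shrinks the ambient set by only a bounded factor, after a constant (in $k$) number of rounds the surviving set still has size $\Omega_\nu(n)$, and the final peeling produces the desired subgraph of size at least $k$.

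The principal obstacle is to tune the argument so that the $\ln k$ factor in $n=C(\nu)k\ln k$ genuinely appears rather than a weaker polynomial-like dependence. A naive implementation of the two-step plan above yields only something of the shape $n=\Theta_\nu(k^{4/3})$; bringing the bound down to $k\ln k$ seems to require either a discrepancy lemma calibrated to the precise target size of $T$, or a more refined peeling that tracks the degree deficits against the accumulated discrepancy surplus layer by layer --- very much in the spirit of the (erroneous but inspiring) Erd\H{o}s--Pach sketch that the excerpt cites. The conceptual core is the anti-concentration lemma, but the quantitative bookkeeping --- how many iterations to perform, how much to shrink $T$ in each, and how the constant $C(\nu)$ blows up with $\nu$ --- is the technical crux.
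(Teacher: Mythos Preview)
Your proposal correctly identifies the two key ingredients --- an Erd\H{o}s--Spencer anti-concentration lemma and some iterative extraction --- but, as you yourself admit, the way you combine them (find a dense subset, then peel) only reaches $n=\Theta_\nu(k^{4/3})$, and you do not supply the refinement needed to close the gap to $k\ln k$. So as it stands this is not a proof; the ``quantitative bookkeeping'' you defer to the end is in fact the whole point.

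The paper's argument replaces your peeling step by a single observation that you are missing: if $X$ maximises the \emph{skew discrepancy} $D_\nu(X):=|D(X)|-\nu|X|^{3/2}$ and $D(X)>0$, then \emph{every} vertex of $G[X]$ already has degree at least $\tfrac12(|X|-1)+\nu\sqrt{|X|-1}$. Indeed, deleting a vertex of smaller degree would strictly increase $D_\nu$. Thus no peeling is ever performed; the maximiser is itself the candidate subgraph. If $|X|\ge k$ we are done, so we may assume $|X|\le k-1$.

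The $\ln k$ then emerges from a different iteration than yours. One repeatedly removes the current skew-discrepancy maximiser $X_i$ from the graph and continues on the remainder, stopping once half the vertices are gone. A calculation using the maximality of each $X_i$ (and the Erd\H{o}s--Spencer bound to ensure every $D(X_i)$ is at least of order $\nu k^{3/2}$) shows that among those $X_i$ with $D(X_i)>0$ the discrepancies decay geometrically: $D(X_{i_{\ell+3}})\le \tfrac56 D(X_{i_\ell})$. Since each $D(X_i)\ge 1$ and the first is at most $\binom{k-1}{2}$, there are only $O(\ln k)$ such sets; but together they cover $\Omega(N)$ vertices, forcing one of them to have size at least $\Omega(N/\ln k)\ge k$ for $C$ large, a contradiction.

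In short, the conceptual gap in your plan is that peeling wastes the discrepancy surplus vertex by vertex, whereas taking a global maximiser of the penalised discrepancy $D_\nu$ converts the surplus into a minimum-degree guarantee in one stroke, and the logarithm comes from counting how many disjoint such maximisers can fit before their discrepancies die out.
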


Note that the $O(k\ln k)$ quantity is tight up to an $O(\ln\ln k)$ factor by the unusual construction in~\cite{ErPa83} (cf.~also Theorem~4 in~\cite{KPPR14+}). 
The astute reader may later notice that the second-order term $\nu \sqrt{\ell-1}$ in the minimum degree guarantee of Theorem~\ref{thm:KPPR} can be straightforwardly improved to an $\Omega(\sqrt{(\ell-1)\ln\ln \ell})$ term. Since this does not seem to help in our results, we have omitted this improvement to minimise technicalities.
On the other hand, a standard random graph construction yields the following, which certifies that the second-order term cannot be improved to a $\omega(\sqrt{(\ell-1)\ln\ln \ell})$ term.
\begin{proposition}\label{prop:standard}
For any $c>0$, for large enough $k$ there is a graph $G$ with at least $k\ln^c k$ vertices such that the following holds. If $H$ is any induced subgraph of $G$ or $\overline{G}$ on $\ell\ge k$ vertices, then $H$ has minimum degree less than $\frac12(\ell-1) + \sqrt{3c(\ell-1)\ln\ln\ell}$.
\end{proposition}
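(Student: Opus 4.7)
The plan is to use the probabilistic method. Take $G \sim G(n,\tfrac12)$ on $n := \lceil k\ln^c k\rceil$ vertices and show that with positive probability no induced subgraph on $\ell \geq k$ vertices of $G$ or $\overline{G}$ violates the stated minimum-degree bound. Since $\overline{G(n,\tfrac12)}$ has the same distribution as $G(n,\tfrac12)$, it suffices to bound the failure probability for $G$ alone and then multiply by $2$ to cover $\overline{G}$.

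Fix $\ell \geq k$, a set $S \subseteq V(G)$ with $|S| = \ell$, and write $t = t_\ell := \sqrt{3c(\ell-1)\ln\ln\ell}$. The key (standard) reduction is a sum-of-degrees trick that converts a minimum-degree event into an edge-count event: if every vertex of $G[S]$ has degree at least $\tfrac12(\ell-1) + t$, then summing degrees and dividing by $2$ yields
\[
|E(G[S])| \;\geq\; \frac{\ell}{2}\Bigl(\tfrac12(\ell-1) + t\Bigr) \;=\; \frac{1}{2}\binom{\ell}{2} + \frac{\ell t}{2}.
\]
Since $|E(G[S])| \sim \mathrm{Bin}\bigl(\tbinom{\ell}{2}, \tfrac12\bigr)$, Hoeffding's inequality gives
\[
\Pr\Bigl(|E(G[S])| \geq \tfrac{1}{2}\tbinom{\ell}{2} + \tfrac{\ell t}{2}\Bigr) \;\leq\; \exp\!\Bigl(-\frac{\ell t^{2}}{\ell-1}\Bigr) \;=\; (\ln\ell)^{-3c\ell}.
\]

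Next I would union-bound over the $\binom{n}{\ell}\leq (en/\ell)^\ell$ choices of $S$ and over $\ell \in \{k,\ldots,n\}$, with an extra factor $2$ for $\overline{G}$. Since $\ell \geq k$ and $n = \lceil k\ln^c k\rceil$, one has $en/\ell \leq e(\ln \ell)^c$, so each summand is bounded by
\[
\Bigl(\tfrac{en}{\ell}(\ln\ell)^{-3c}\Bigr)^\ell \;\leq\; \bigl(e(\ln\ell)^{-2c}\bigr)^\ell \;\leq\; 2^{-\ell}
\]
once $k$ is sufficiently large. Summing the geometric series from $\ell = k$ then bounds the total failure probability by $O(2^{-k}) = o(1)$, and a graph $G$ with the required property exists.

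I expect no real obstacle here: this is essentially routine probabilistic-method bookkeeping. The only mildly non-obvious point is the sum-of-degrees reduction, which is what lets one apply a single Chernoff/Hoeffding bound to what is otherwise a collection of dependent per-vertex degree events inside $G[S]$. The constant $3c$ under the square root is chosen generously so as to beat both the $(en/\ell)^\ell$ subset count and the extra $(\ln\ell)^{c\ell}$ coming from $n$ being of order $k\ln^c k$, while still leaving a comfortable $(\ln\ell)^{-2c}$ margin per vertex in the union bound.
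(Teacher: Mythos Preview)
Your argument is correct and is precisely the standard random-graph computation the paper has in mind. The paper's own proof is a one-line pointer (``substitute $\nu(\ell)=\sqrt{(2c\ln\ln\ell)/\ln\ell}$ into the proof of Theorem~3(b) in~\cite{KPPR14+}''), and what that reference unpacks to is exactly the $G(n,\tfrac12)$ first-moment argument you wrote out: reduce the minimum-degree event to a deviation of $e(S)$ via the handshake identity, apply Hoeffding to the $\mathrm{Bin}\bigl(\binom{\ell}{2},\tfrac12\bigr)$ edge count, and union-bound over all $\binom{n}{\ell}$ sets and all $\ell\ge k$. Your choice of the constant $3c$ matches the paper's, and the bookkeeping is sound (the only cosmetic slip is that $en/\ell\le e(\ln\ell)^c$ should carry an extra $1+o(1)$ from the ceiling in $n$, which is harmless for the final $(\ln\ell)^{-2c}$ margin).
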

\begin{proof}
Substitute $\nu(\ell)= \sqrt{(2c\ln\ln\ell)/\ln\ell}$ into the proof of Theorem~3(b) in~\cite{KPPR14+}. (We may not use Theorem~3(b) in~\cite{KPPR14+} directly as stated as it needs $\nu(\ell)$ to be non-decreasing in $\ell$.)
\end{proof}

We use a result on graph discrepancy to prove Theorem~\ref{thm:KPPR}. 
Given a graph $G=(V,E)$, the {\em discrepancy} of a set $X\subseteq V$ is defined as
\[
D(X):= e(X)-\frac{1}{2}\binom{|X|}{2},
\]
where $e(X)$ denotes the number of edges in the subgraph $G[X]$ induced by $X$.
We use the following result of Erd\H{o}s and Spencer~\cite[Ch.~7]{ErSp74}.
\begin{lemma}[Theorem~7.1 of~\cite{ErSp74}]\label{lem:ES}
Provided $n$ is large enough and $t \in \mathbb{N}$ satisfies $\frac12 \log_2 n < t \leq n$, then any graph $G=(V,E)$ of order $n$ satisfies
\[
\max _{S\subseteq V, |S|\le t} |D(S)|\ge \frac{t^{3/2}}{10^3}\sqrt{\ln(5n/t)}.
\]
\end{lemma}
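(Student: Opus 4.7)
Write $a_{uv}:=\mathbf{1}[uv\in E]-\tfrac{1}{2}\in\{\pm\tfrac12\}$, so that $D(S)=\sum_{u<v\in S}a_{uv}$ and the crucial identity $a_{uv}^2=\tfrac14$ holds uniformly over pairs. The plan is to exhibit a large-discrepancy subset $S$ with $|S|\le t$ via the probabilistic method, i.e.\ to produce a random $S$ whose expected squared discrepancy is already at least the target squared, and then use $\max_S|D(S)|\ge\sqrt{\mathbb{E}[D(S)^2]}$.

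The first step is the second-moment baseline. For $S$ chosen uniformly among subsets of a fixed size $s\le t$, expand
\[
\mathbb{E}[D(S)^2]=\sum_{\{u,v\},\{u',v'\}} a_{uv}a_{u'v'}\Pr[\{u,v,u',v'\}\subseteq S],
\]
and group by $k:=|\{u,v,u',v'\}|\in\{2,3,4\}$. The diagonal ($k=2$) term is unconditionally $\binom{n}{2}\cdot\tfrac14\cdot\binom{s}{2}/\binom{n}{2}=\binom{s}{2}/4$, while the off-diagonal ($k=3$ and $k=4$) terms can be expressed respectively in terms of $\sum_w(d(w)-\tfrac{n-1}{2})^2$ and $D(V)^2$. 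This already forces $\max_{|S|=s}|D(S)|\gtrsim s$, which recovers the correct $t^{3/2}$ order once combined with the amplification below but still lacks the $\sqrt{\ln(5n/t)}$ factor.

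To extract the extra $\sqrt{s\ln(n/s)}$, I would amplify by passing to higher moments. The idea is to compute (or suitably upper-bound) $\mathbb{E}[D(S)^{2k}]$ for $k=\Theta(\ln(n/s))$, view $D(S)$ as a degree-two polynomial in the Bernoulli indicators $(\mathbf{1}[v\in S])_{v\in V}$, and invoke a Hanson--Wright-type tail bound combined with a Paley--Zygmund-style deduction; equivalently, one can union-bound across a dyadic ladder of sizes. Since $\ln\binom{n}{s}\asymp s\ln(n/s)$, this is exactly the number of degrees of freedom available to pull out the logarithmic factor, and optimising $s$ over $[t/2,t]$ gives the stated bound with the explicit constant $10^{-3}$.

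The main obstacle is the amplification step: standard Hoeffding/Chernoff bounds apply to linear forms and not directly to the quadratic form $D(S)$, and the off-diagonal contributions ($k\in\{3,4\}$) in the second-moment calculation must be handled carefully since they can be of either sign (one typically uses Cauchy--Schwarz to bound them by the diagonal term). The hypothesis $t>\tfrac12\log_2 n$ enters precisely to guarantee $\ln(5n/t)\lesssim t$, without which the target $t^{3/2}\sqrt{\ln(5n/t)}/10^3$ would exceed the trivial ceiling $\binom{t}{2}/2$ on $|D(S)|$.
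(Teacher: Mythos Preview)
The paper does not prove this lemma; it is quoted as Theorem~7.1 of \cite{ErSp74} and used as a black box in the proof of Theorem~\ref{thm:KPPR}. There is therefore no in-paper argument to compare your attempt against.

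That said, your sketch has a genuine gap beyond the one you flag. The second-moment step is fine and yields $\max_{|S|\le s}|D(S)|\gtrsim s$, but no higher-moment argument on $D(S)$ for a \emph{single} random $S$ will recover the missing factor $\sqrt{s}$. Hanson--Wright or hypercontractivity gives only an \emph{upper} bound $\mathbb{E}[D(S)^{2k}]\le (Cks^2)^k$; Paley--Zygmund needs a matching \emph{lower} bound, and Jensen from the second moment gives merely $(\mathbb{E}[D(S)^2])^k\asymp s^{2k}$, so the conclusion stays at $\max|D(S)|\gtrsim s$ for every $k$. The heuristic ``$\ln\binom{n}{s}\asymp s\ln(n/s)$ degrees of freedom'' is an i.i.d.\ upper-tail intuition, not a lower bound for a fixed deterministic family of correlated quantities, and there are graphs (e.g.\ $K_{n/2,n/2}$) for which the distribution of $D(S)$ over random $S$ is far too concentrated for any moment amplification to reach $s^{3/2}$.

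In the actual Erd\H{o}s--Spencer argument the extra $\sqrt{t}$ arises from a two-stage mechanism rather than from tails of $D(S)$. One first fixes (probabilistically) a set $A$ of size about $t/2$ such that $\Theta(t)$ vertices $v$ satisfy $\bigl|d_A(v)-|A|/2\bigr|\gtrsim\sqrt{t\ln(n/t)}$; the logarithm enters here through a large-deviation \emph{lower} bound for the \emph{linear} form $\sum_{u}a_{uv}\mathbf{1}[u\in A]$, which is precisely where Chernoff-type estimates apply. Taking $B$ to be $t/2$ such vertices with a common sign gives bipartite discrepancy $\bigl|e(A,B)-\tfrac12|A||B|\bigr|\gtrsim t\cdot\sqrt{t\ln(n/t)}$, and then one of $|D(A)|$, $|D(B)|$, $|D(A\cup B)|$ is $\gtrsim t^{3/2}\sqrt{\ln(n/t)}$. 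The $t^{3/2}$ thus comes from summing $\Theta(t)$ linear deviations of size $\sqrt{t\ln(n/t)}$, not from the tail behaviour of the quadratic $D(S)$ itself.
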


\begin{proof}[Proof of Theorem~\ref{thm:KPPR}]
Let $G=(V,E)$ be any graph on at least $N =\lceil Ck\ln k\rceil$ vertices for a sufficiently large choice of $C$.
We may assume that $k > \frac12 \log_2 N$ because otherwise $G$ or $\overline{G}$ contains a clique of order $k$ by the Erd\H{o}s-Szekeres bound \cite{ErSz35} on ordinary Ramsey numbers.

  For any $X\subseteq V$ and $\nu>0$, we define the following skew form of discrepancy:
\[
D_\nu(X):= |D(X)|-\nu |X|^{3/2}.
\]
 
We now construct a sequence $(H_0,H_1,\dots, H_t)$ of graphs as follows.  Let $H_0$ be $G$ or $\overline{G}$.
At step $i + 1$, we form $H_{i+1}$ from $H_i=(V_i,E_i)$ by letting $X_i\subseteq V_i$ attain the maximum skew discrepancy $D_\nu$ and setting $V_{i+1}:=V_i\setminus X_i$ and $H_{i+1}:=H[V_{i+1}]$.
We stop after step $t+1$ if $|V_{t+1}| < \tfrac12N$.
Let $I^+\subseteq \{1,\dots, t\}$ be the set of indices $i$ for which $D(X_i)>0$.
By symmetry, we may assume 
\begin{align}
\sum_{i\in I^+} |X_i| \ge \frac{1}{4}N.\label{eq:Iplus}
\end{align}

\begin{claim}\label{claim:deg}
For any $i\in I^{+}$ and $x\in X_i$,
$\deg_{H_i}(x)\ge \tfrac12(|X_i|-1) +\nu (|X_i|-1)^{1/2}$.
\end{claim}
\begin{claimproof}
Write $|X_i|=n_i$. We are trivially done if $n_i = 1$, so assume $n_i \ge 2$.
Suppose $x\in X_i$ has strictly smaller degree than claimed and set $X'_i:=X_i\setminus \{x\}$.
Then, since $i\in I^{+}$,
\begin{align*}
D_\nu(X'_i)&\ge e(X'_i)-\frac{1}{2}\binom{n_i-1}{2} -\nu(n_i-1)^{3/2}	
\\
&>e(X_i)-\frac{1}{2}\binom{n_i}{2} -\nu \sqrt{n_i-1}-\nu(n_i-1)^{3/2}.	
\end{align*}
Note that $n_i^{3/2} > n_i^{1/2}+(n_i-1)^{3/2}$,
which by the above  
implies $D_\nu(X'_i)>D_\nu(X_i)$, contradicting the maximality of $D_\nu(X_i)$.
\end{claimproof}

\noindent
Claim~\ref{claim:deg} implies that we may assume for each $i\in I^+$ that $|X_i| \le k-1$, or else we are done.
This gives for any $i_1,\dots,i_4\in I^+$ that
\begin{align}
\left(\sum_{s=1}^4 |X_{i_s}|\right)^{3/2}\le 8(k-1)^{3/2}.\label{eq:4bound}
\end{align}
Writing $I^+=\{i_1,\dots,i_m\}$, we next show the following.
\begin{claim}\label{claim:5/6}
For any $\ell\in \{1,\dots,m-3\}$,
$D(X_{i_{\ell+3}})\le \tfrac56D(X_{i_\ell})$.
\end{claim}

\begin{claimproof}
For $X\subseteq V$, let us write $\nu(X):=\nu |X|^{3/2}$ so that $D_{\nu}(X) = |D(X)| - \nu(X)$.
For $i_1,\dots, i_r\in I^+$, we may write $X_{i_1,\dots, i_r}:=\bigcup_{s=1}^r X_{i_s}$.
For disjoint $X,Y\subseteq V$, we define the \emph{relative discrepancy} between $X$ and $Y$ to be
\[
D(X,Y):=e(X,Y)-\frac{1}{2}|X||Y|,
\]
where $e(X,Y)$ denotes the number of edges between $X$ and $Y$.

Now let $i,j\in I^+$ with $i<j$. 
Then, by the maximality of $D_\nu(X_i)$, we have $D_\nu(X_i \cup X_j) \le D_\nu(X_i)$, i.e.\ 
\[
|D(X_i)+D(X_i,X_j)+D(X_j)|-\nu(X_{i,j})\le |D(X_i)|-\nu(X_{i})= D(X_i)-\nu(X_{i}) ,
\]
and hence 
\begin{align}
D(X_j)\le -D(X_i,X_j)+\nu(X_{i,j}).\label{eq:Dp bound ij}
\end{align}
Applying~\eqref{eq:Dp bound ij} (and the fact that $\nu(X_{i_{\ell+r},i_{\ell+s}})\le \nu(\bigcup_{s=0}^3 X_{i_{\ell+s}})$ for any $r,s\in \{0,1,2,3\}$), we find that
\begin{align}
&D(X_{i_{\ell+1}})+2D(X_{i_{\ell+2}})+3D(X_{i_{\ell+3}})\le -\sum_{0\le r<s\le 3}D(X_{i_{\ell+r}},X_{i_{\ell+s}})+6 \nu(\textstyle \bigcup_{s=0}^3 X_{i_{\ell+s}}).\label{eq:Dp 4bound}
\end{align}
Using $-D(\bigcup_{s=0}^3 X_{i_{\ell+s}})-\nu(\bigcup_{s=0}^3 X_{i_{\ell+s}})\leq D_\nu(\bigcup_{s=0}^3X_{i_{\ell+s}})\le D_\nu(X_{i_\ell})$, we obtain
\begin{align*}%\label{eq:disc sum}
-\sum_{s=0}^3 D(X_{i_{\ell+s}}) -\sum_{0\le r<s\le 3}D(X_{i_{\ell+r}},X_{i_{\ell+s}})
\le D(X_{i_\ell})+\nu(\textstyle \bigcup_{s=0}^3 X_{i_{\ell+s}}),
\end{align*}
which combined with~\eqref{eq:Dp 4bound} implies that
$D(X_{i_{\ell+2}})+2D(X_{i_{\ell+3}})\le 2D(X_{i_\ell})+7\nu(\textstyle \bigcup_{s=0}^3 X_{i_{\ell+s}})$.
From this, we obtain that 
\begin{align}
3D(X_{i_{\ell+3}})\le 2D(X_{i_\ell})+8\nu(\textstyle \bigcup_{s=0}^3 X_{i_{\ell+s}}), \label{eq:disc bound}
\end{align}
where we have used the fact that $D(X_{i_{\ell+3}}) \le D(X_{i_{\ell+2}}) + \nu(\textstyle \bigcup_{s=0}^3 X_{i_{\ell+s}})$, which follows since $D_{\nu}(X_{i_{\ell+3}}) \le D_{\nu}(X_{i_{\ell+2}})$.
Using the fact that the graph $H_{i_{s}}$ for any $s\in \{1,\dots, m\}$ has at least $\frac12N \ge \frac C2 k\ln k$ vertices, it follows by Lemma~\ref{lem:ES} (using our assumption on $k$) that there exists a subset $Y_s\subseteq V_{i_s}$ of size at most $k$ which satisfies 
\[
|D(Y_s)|\ge k^{3/2}\frac{\sqrt{\ln(C\ln k)}}{10^3}.
\]
However, by our choice of $X_{i_s}$, we have
\begin{align*}
D(X_{i_s})
&
\geq D_\nu(X_{i_s})\geq D_\nu(Y_s)\geq |D(Y_s)|-\nu k^{3/2}
\\
& \geq k^{3/2}\left(\frac{\sqrt{\ln(C\ln k)}}{10^3}-\nu\right)\geq 2\left(8\nu\left(\bigcup_{s=0}^3X_{i_{\ell+s}}\right)\right),
\end{align*}
by \eqref{eq:4bound}, provided $C$ is sufficiently large.
Therefore, from~\eqref{eq:disc bound} we find that $3D(X_{i_{\ell+3}})\le 2D(X_{i_\ell})+\tfrac12D(X_{i_\ell})$, proving the claim.
\end{claimproof}

\noindent
Claim~\ref{claim:5/6} now implies that
$(5/6)^{(m-1)/3} D(X_{i_1}) \ge D(X_{i_m})\ge 1$ (assuming for simplicity $m\equiv 1 \pmod 3$), which then implies
\[
m-1\le \frac{3\ln(D(X_{i_1}))}{\ln(6/5)}\le \frac{6}{\ln(6/5)}\ln(k-1).
\]
By~\eqref{eq:Iplus}, we deduce that at least one of the $m$ sets $X_i$ with $i\in I^+$ satisfies
\[
|X_i|\ge \frac{N\ln(6/5)}{25\ln k}.
\]
This last quantity is at least $k$ by a choice of $C$ sufficiently large, contradicting our assumption that $|X_i|\le k-1$ for each $i\in I^+$. This completes the proof.
\end{proof}

\section{Subgraphs of high minimum degree via set-system discrepancy}\label{sec:disc}
In this section we prove, based on a well known discrepancy result of Spencer \cite{Spen}, that from a graph on $\ell=Ck$ vertices with minimum degree at least $\ell/2+C'\sqrt{\ell}$ (with $C'$ depending on $C$) we can select a subgraph on $k$ vertices that has minimum degree at least $k/2$.

We start by defining the various standard notions of discrepancy that we need. 
Suppose $\mathcal{H} = \{A_1, \ldots, A_n\}$ where $A_i \subseteq V = [n]$. 
Let $\chi : V \rightarrow \{-1, 1\}$ be a colouring of $V$ with the 
colours $-1$ and $1$. For any $S \subseteq V$, we write $\chi(S) :=  
\sum_{i \in S} \chi(i)$ and we define the \emph{discrepancy} of $\mathcal{H}$ to be
\[
{\rm disc}(\mathcal{H}) := \min_{ \chi \in \{-1 , 1 \}^V} \max_{S \in \mathcal{H}} \chi(S).
\] 
The result of Spencer \cite{Spen} states that for any such $\mathcal{H}$ we have $\disc(\mathcal{H}) \leq 6\sqrt{n}$.

For $X \subseteq V$, we define $\mathcal{H}|_X:= \{A_1 \cap X ,\ldots , A_n \cap X \}$. Then the \emph{hereditary discrepancy} of $\mathcal{H}$ is defined by
\[
\herdisc(\mathcal{H}) := \max_{X \subseteq V} \disc(\mathcal{H}|_{X}).
\] 
The result of Spencer also immediately implies that $\herdisc(\mathcal{H}) \leq 6\sqrt{n}$ for any $\mathcal{H}$.

Let $A$ be the incidence matrix of $\mathcal{H}$, i.e.~$A$ is the $n \times n$ matrix given by 
\[
A_{ij} =
\begin{cases}
1 &\text{ if } j \in A_i, \\
0 &\text{ otherwise. } 
\end{cases}
\] 
Then we clearly have
\[\disc(\mathcal{H}) = \min_{x \in \{-1,1\}^V} \lVert Ax\rVert_{\infty} =2\min_{x\in \{0,1\}^V}\left\lVert A\left(x - \frac12\mathbb1\right) \right\rVert_{\infty},
\]where $\mathbb 1$ is the all $1$ vector. 

Now we define the \emph{linear discrepancy} by
\begin{equation}\label{eq:lin disc}
\lindisc(\mathcal{H}) := \max_{c \in [0,1]^V} \min_{x \in \{0,1\}^V} \lVert A(x - c) \rVert_{\infty}.
\end{equation}
Note that here we are using $\{0,1\}$-colourings again.
Similarly, we define the hereditary linear discrepancy of $\mathcal{H}$ by
\[
\herlindisc(\mathcal{H}):= \max_{X \subseteq V} \lindisc(\mathcal{H}|_{X}).
\]

A result of Lov{\'a}sz, Spencer, and Vestergombi \cite{LSV} states that $\herlindisc(\mathcal{H}) \leq \herdisc(\mathcal{H})$. (Note that the factor of $2$ from \cite{LSV} is missing to adjust for the slightly different definition we are using.) Combining with Spencer's result, we have 
\[
\lindisc(\mathcal{H}) \leq \herlindisc(\mathcal{H}) \leq \herdisc(\mathcal{H}) \leq 6\sqrt{n}.
\]
If we set $c$ to be the all $p$ vector (for some $p \in [0,1]$) in \eqref{eq:lin disc}, we obtain the following result.

\begin{lemma}\label{lem:cor spen}
Let $A_1, \ldots, A_n \subseteq V = [n]$ and $p \in [0,1]$. Then there exists $Y \subseteq V$ such that, for all $i\in[n]$,
\[
| |A_i \cap Y| - p|A_i| | \leq 6\sqrt{n}.
\] 
\end{lemma}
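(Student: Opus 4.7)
The plan is essentially to unpack the definition of linear discrepancy with the specific choice $c = p\cdot \mathbb{1}$ and to read off the desired bound entry by entry. This is a direct corollary of the chain of inequalities $\lindisc(\mathcal{H}) \le \herlindisc(\mathcal{H}) \le \herdisc(\mathcal{H}) \le 6\sqrt{n}$ established just above the lemma statement, so the main task is only to translate the matrix-norm statement into the set-theoretic one.

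First I would set $\mathcal{H} = \{A_1,\dots,A_n\}$ with incidence matrix $A$, and apply the linear discrepancy bound from \eqref{eq:lin disc} to the constant vector $c = p\cdot\mathbb{1}\in [0,1]^V$. This yields some $x \in \{0,1\}^V$ with $\lVert A(x-c)\rVert_\infty \le \lindisc(\mathcal{H}) \le 6\sqrt{n}$. Next I would let $Y := \{j \in V : x_j = 1\}$ so that $x$ is the indicator vector of $Y$.

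It then remains to compute the $i$-th coordinate of $A(x-c)$. By definition of the incidence matrix, $(Ax)_i = \sum_{j \in A_i} x_j = |A_i \cap Y|$, and similarly $(Ac)_i = p\sum_{j \in A_i} 1 = p|A_i|$. Hence $\bigl|\,|A_i \cap Y| - p|A_i|\,\bigr| = |(A(x-c))_i| \le \lVert A(x-c)\rVert_\infty \le 6\sqrt{n}$ for every $i \in [n]$, as required.

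There is no real obstacle here; the lemma is exactly the pointwise reformulation of the linear discrepancy statement at the constant target vector $p\cdot\mathbb{1}$, and the only thing worth emphasising in the write-up is that the $\{0,1\}$-normalisation in our definition of $\lindisc$ (as opposed to the $\{-1,1\}$ version used in \cite{LSV}) is what allows us to deduce a bound of $6\sqrt{n}$ rather than $12\sqrt{n}$, matching the constant in Spencer's theorem.
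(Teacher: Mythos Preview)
Your proposal is correct and is exactly the argument the paper has in mind: the lemma is obtained simply by specialising the linear discrepancy bound \eqref{eq:lin disc} to the constant vector $c = p\cdot\mathbb{1}$, and your write-up spells out this specialisation cleanly. There is nothing to add.
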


We use the previous lemma to prove the following result.

\begin{lemma}
\label{lem:thinning exact}
Suppose $G=(V,E)$ is a graph with $\ell = Pk$ vertices for some $P>1$ and $k$ a positive integer, and suppose 
\[
\delta(G) \geq \frac{1}{2} \ell + \eta \sqrt{\ell}
\]
for some $\eta > 0$. Then $G$ has an induced subgraph $H$ on $k$ vertices with minimum degree
\[
\delta(H) \geq \frac{1}{2}k + \left( \frac{\eta}{\sqrt{P}} - 19\sqrt{P} \right) \sqrt{k}.
\]
\end{lemma}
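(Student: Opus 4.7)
The plan is to invoke Lemma~\ref{lem:cor spen} with $p = 1/P$, applied to the set system comprising all vertex neighbourhoods of $G$ together with the set $V$ itself. The conclusion will be a subset $Y$ with $|Y|$ close to $k$ and such that, for every $v \in V$, the count $|N(v) \cap Y|$ is close to $|N(v)|/P$. A small adjustment of $Y$ to exact size $k$ will then produce the desired induced subgraph $H = G[Y^{\ast}]$.

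In more detail, to accommodate the $\ell$ neighbourhood constraints along with the size constraint within the $n$-set budget of Lemma~\ref{lem:cor spen}, I will enlarge the ambient universe by a single dummy point, working with $V' = V \cup \{\star\}$ of size $n = \ell + 1$, and set $A_i = N_G(v_i)$ for each $v_i \in V$ together with $A_{\ell+1} = V$. Lemma~\ref{lem:cor spen} then produces $Y \subseteq V'$ satisfying
\[
\bigl| |N(v) \cap Y| - |N(v)|/P \bigr| \leq 6\sqrt{\ell+1} \quad \text{for every } v \in V,
\]
together with $\bigl| |Y \cap V| - k \bigr| \leq 6\sqrt{\ell+1}$. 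Writing $Y_0 = Y \cap V$, I will then adjust $Y_0$ by arbitrarily adding or deleting at most $6\sqrt{\ell+1}$ vertices to obtain a set $Y^\ast \subseteq V$ of size exactly $k$. Since each vertex flip changes $|N(v) \cap Y_0|$ by at most $1$, every $v \in Y^\ast$ satisfies
\[
\deg_H(v) \;\geq\; \frac{|N(v)|}{P} - 12\sqrt{\ell+1} \;\geq\; \frac{\delta(G)}{P} - 12\sqrt{\ell+1}.
\]

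Plugging in $\delta(G) \geq \ell/2 + \eta\sqrt{\ell}$ and $\ell = Pk$, this simplifies to $k/2 + (\eta/\sqrt{P})\sqrt{k} - O(\sqrt{P}\sqrt{k})$, and the coefficient $19\sqrt{P}$ in the target comfortably absorbs the $12$ together with the mild overhead from $\sqrt{\ell+1}$ versus $\sqrt{\ell}$. The main point of care is arranging a single application of Lemma~\ref{lem:cor spen}, which permits exactly $n$ sets on $n$ vertices, to govern both all $\ell$ neighbourhood counts and the size of $Y$ at once; the dummy-point enlargement handles this cleanly, and the remaining steps are straightforward bookkeeping.
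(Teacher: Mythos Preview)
Your proof is correct and follows essentially the same approach as the paper's: apply Lemma~\ref{lem:cor spen} to the vertex neighbourhoods together with $V$ itself, then adjust to exact size $k$. The only difference is in how the $n$-sets-on-$n$-vertices constraint is met: the paper drops one neighbourhood (that of $v_\ell$) and later excludes $v_\ell$ from the output, and it biases $p$ slightly above $1/P$ so that $|Y| \ge k+1$ and only deletions are needed; you instead add a dummy point to the universe and take $p = 1/P$, allowing the adjustment to go either way. Both devices work and yield constants well within the stated $19\sqrt{P}$.
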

\begin{proof}
Write $V= \{v_1, \ldots, v_{\ell}\}$, let $A_0=V$ and for each $i\in[\ell]$ let $A_i \subseteq V$ be the neighbourhood of $v_i$ in $G$. We apply Lemma~\ref{lem:cor spen} to the sets $A_0, \ldots, A_{\ell-1}$ with $p = (k + 1 + 6\sqrt{\ell})/ \ell$. (Note that if $p>1$ then with a simple calculation it is easy to see we can obtain the desired graph $H$ simply by deleting any $\ell - k$ vertices from $G$.) Thus there exists $Y \subseteq V$ satisfying
\[
| |A_i \cap Y| - p|A_i| | \leq 6\sqrt{\ell}
\] 
for all $i\in\{0, \ldots, \ell-1\}$. Applying this for $i=0$ and noting $A_0 \cap Y = Y$ gives
\[
k+1 = p|A_0|  - 6\sqrt{\ell} \leq 
|Y|   \leq
p|A_0|  + 6\sqrt{\ell} = k + 1 + 12\sqrt{Pk}  
\]
and applying it for $i\in[\ell-1]$ gives
\begin{align*}
|A_i \cap Y| \geq p|A_i| - 6\sqrt{\ell}
\geq \frac{k}{\ell}\left( \frac{1}{2}\ell + \eta \sqrt{\ell} \right) - 6\sqrt{\ell}
&=  \frac{1}{2}k + \eta \frac{k}{\sqrt{\ell}} - 6\sqrt{\ell} \\
&=  \frac{1}{2}k + \left( \frac{\eta}{\sqrt{P}} - 6\sqrt{P} \right) \sqrt{k}.
\end{align*}
Thus $Y$ has between $k+1$ and $k + 1 + 12\sqrt{P}\sqrt{k}$ vertices. Let $Z$ be an arbitrary subset of $Y \setminus \{v_{\ell}\}$ of size $k$ and let $H = G[Z]$. Then since we have removed at most $12\sqrt{Pk} + 1 \leq 13\sqrt{Pk}$ vertices from $Y$ to obtain $Z$, we have for each $i\in[\ell-1]$ that
\begin{align*}
|A_i \cap Z| \geq \frac{1}{2}k + \left( \frac{\eta}{\sqrt{P}} - 19\sqrt{P} \right) \sqrt{k}.
\end{align*}
In particular this means
\[
\delta(H) \geq \frac{1}{2}k + \left( \frac{\eta}{\sqrt{P}} - 19\sqrt{P} \right) \sqrt{k},
\]
as desired.
\end{proof}

\section{Proof of Theorem~\ref{thm:main}}\label{sec:greedy}

To prove the theorem, we use as a subroutine the following algorithm, which is inspired by the greedy algorithm for max-cut or min-bisection.
 \begin{lemma}\label{lem:max cut}
Let $G =(V,E)$ be a graph of order $n$ with $\delta(G)\ge \frac 12(n-1)+t$ for some number $t$.
Let $\alpha\in [0,1]$ and let $a,b\in \N$ such that $a+b=n$.
Then either there exists $A\subseteq V$ of size $a$ such that $\delta(G[A])\ge \frac12a-1+\alpha t$, or there exists $B\subseteq V$ of size $b$ such that $\delta(G[B])\ge \frac12 b-1+(1-\alpha)t$.
\end{lemma}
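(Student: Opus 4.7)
The plan is to run a local-search/swap argument on a balanced bipartition, in the spirit of the standard proof that a greedy max-cut achieves at least $m/2$ edges. I fix a partition $V = A \sqcup B$ with $|A|=a$ and $|B|=b$ that \emph{minimises} the cut size $e(A,B)$; such a partition exists since $a+b=n$. Write $d_A(v) := |N(v)\cap A|$ and $d_B(v) := |N(v)\cap B|$, so that $d_A(v) + d_B(v) = \deg_G(v) \ge \tfrac12(n-1)+t$. Assume for contradiction that \emph{both} conclusions of the lemma fail: then there are ``bad'' vertices $u\in A$ and $w\in B$ satisfying $d_A(u) < \tfrac12 a - 1 + \alpha t$ and $d_B(w) < \tfrac12 b - 1 + (1-\alpha)t$ respectively.

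Combining each of these upper bounds with the minimum-degree hypothesis immediately yields the matching lower bounds
\[
d_B(u) > \tfrac12(b+1) + (1-\alpha)t, \qquad d_A(w) > \tfrac12(a+1) + \alpha t,
\]
which say that each bad vertex has strictly more neighbours on the opposite side than it ``ought to''. The next step is to swap $u$ and $w$, producing $A' := (A\setminus\{u\})\cup\{w\}$ and $B' := (B\setminus\{w\})\cup\{u\}$, which still satisfy $|A'|=a$ and $|B'|=b$.

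A direct edge-count (partitioning edges by whether they are incident to $u$, to $w$, to both, or to neither) gives the swap identity
\[
e(A',B') - e(A,B) \;=\; \bigl(d_A(u) + d_B(w)\bigr) - \bigl(d_B(u) + d_A(w)\bigr) + 2\,\mathbb{1}[uw\in E].
\]
Summing the four displayed inequalities gives $d_A(u) + d_B(w) < \tfrac12 n - 2 + t$ and $d_B(u) + d_A(w) > \tfrac12 n + 1 + t$, hence $e(A',B') - e(A,B) < -3 + 2 = -1 < 0$. This contradicts the minimality of $e(A,B)$ and completes the proof.

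The only thing to watch is the bookkeeping of the $\pm 1$ terms: the minimum-degree hypothesis has to supply enough slack that the two $-1$'s in the target conclusions combine to more than offset the $+2$ correction coming from a possible $uw$-edge, and the statement is set up precisely so that this works. I do not anticipate any deeper obstacle; the argument needs no positivity of $t$, nothing about $\alpha$ beyond $\alpha \in [0,1]$, and degenerate cases $a\in\{0,n\}$ are handled vacuously (the relevant induced subgraph is either empty or all of $G$).
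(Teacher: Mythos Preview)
Your proof is correct and takes essentially the same approach as the paper: both are local-search swap arguments showing that if neither conclusion holds, swapping a bad vertex from each side improves a potential. The only cosmetic difference is the choice of potential---you minimise the cut $e(A,B)$ and derive a one-shot contradiction, while the paper iteratively swaps and tracks $e(G[A])$ increasing; since $e(A,B)+e(G[A])+e(G[B])=e(G)$, these are equivalent bookkeeping.
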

\begin{proof}
Take any $A\subseteq V$ of size $a$ and let $B:=V\setminus A$.
If there exists $x\in A$ with $\deg_A(x)< \frac12a-1+\alpha t$ and $y\in B$ with $\deg_B(y)<\frac12b-1+(1-\alpha)t$, then move $x$ to $B$ and $y$ to $A$, i.e.~{\em swap} $x$ and $y$.
Note that when there is no such pair of vertices $x,y$ we are done. We just need to prove that, if we keep iterating, then this procedure must stop at some point. 

Consider the number of edges in $G[A]$ before and after we swap $x$ and $y$. The number of edges in $G[A]$ increases by at least \begin{align*}
\deg_A(y) - \deg_A(x) - 1  
&\ge \delta(G) - \deg_B(y) - \deg_A(x) - 1 \ge 1/2,
\end{align*}
(where we subtracted $1$ in case $x$ and $y$ are adjacent).
This shows that we cannot continue to swap pairs indefinitely.
\end{proof}

At last we are ready to prove the main result.
In fact, we prove something stronger.

\begin{theorem}\label{thm:main,threshold}
There exist constants $D,D'>0$ such that for $k \geq 2$ and any graph $G$ on $Dk\ln k$ vertices, $G$ or its complement $\overline{G}$ has an induced subgraph on $k$ vertices with minimum degree 
at least $\frac12 (k-1)+ D'\sqrt{(k-1)/\ln k}$.
\end{theorem}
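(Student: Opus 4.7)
The plan is to chain the three main tools of the paper in sequence: apply Theorem~\ref{thm:KPPR} to obtain a large induced subgraph of $G$ or $\overline{G}$ with substantial minimum-degree excess, apply Lemma~\ref{lem:thinning exact} to round its vertex count down to a multiple of $k$, and then iterate Lemma~\ref{lem:max cut} to shrink the result to exactly $k$ vertices while preserving the excess in a suitable normalised sense.

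First, fix an absolute constant $\nu$ large enough for what follows (say $\nu=40$) and set $D:=C(\nu)$ from Theorem~\ref{thm:KPPR}. That theorem, applied to $G$, produces an induced subgraph $G_0$ of $G$ or $\overline{G}$ on some $\ell\geq k$ vertices with $\delta(G_0)\geq \tfrac12(\ell-1)+\nu\sqrt{\ell-1}$, which we rewrite as $\delta(G_0)\geq \tfrac12\ell+\eta\sqrt{\ell}$ with $\eta\to\nu$ as $\ell\to\infty$. Setting $m:=k\lfloor\ell/k\rfloor$ and $P:=\ell/m\in[1,2)$, I apply Lemma~\ref{lem:thinning exact} with $m$ playing the role of the lemma's $k$. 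This produces an induced subgraph $G_1$ on $m$ vertices with
\[
\delta(G_1)\geq \tfrac12 m+\eta_1\sqrt{m}, \qquad \eta_1:=\eta/\sqrt{P}-19\sqrt{P};
\]
the choice $\nu=40$ together with $P<2$ forces $\eta_1$ to be bounded below by a positive absolute constant. (If $\ell$ is already a multiple of $k$, simply take $G_1:=G_0$.)

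Next, I iterate Lemma~\ref{lem:max cut} on $G_1$. At each stage, with current induced subgraph $H$ on $n$ vertices (always a multiple of $k$) and excess $t$ meaning $\delta(H)\geq\tfrac12(n-1)+t$, I apply the lemma with $a=k$, $b=n-k$ and $\alpha=k/n$. If the $A$-side triggers, it is a $k$-vertex induced subgraph with excess at least $kt/n-\tfrac12$, so we halt and output it. Otherwise the $B$-side triggers, giving a graph on $n-k$ vertices (still a multiple of $k$) whose normalised excess satisfies
\[
\frac{t'}{n'}=\frac{t}{n}-\frac{1}{2n'},
\]
which is the key property unlocked by the choice $\alpha=k/n$. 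We iterate until we reach $n=k$, which must happen within $m/k-1\leq D\ln k$ stages, at which point the current subgraph itself is the output.

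Summing the losses, the cumulative drop in normalised excess is a harmonic-type sum bounded by $\sum 1/(2n_i)\leq H_{m/k-1}/(2k)=O(\ln\ln k/k)$, negligible against the required level. In either halting mode, the excess of the final $k$-vertex subgraph is at least
\[
k\cdot\frac{t_1}{m}-O(\ln\ln k)\;\geq\;\eta_1\sqrt{\frac{k}{D\ln k}}-O(\ln\ln k),
\]
using $m\leq Dk\ln k$. For sufficiently large $k$ this exceeds $D'\sqrt{(k-1)/\ln k}$ with $D':=\eta_1/(2\sqrt{D})$, and small $k$ can be absorbed into the constants. The main obstacle is the normalised-excess bookkeeping in this final phase: the choice $\alpha=k/n$ is precisely what balances the $A$- and $B$-side guarantees so that $t/n$ is almost preserved, and one has to check that the cumulative $O(\ln\ln k/k)$ loss is indeed negligible compared with the target $\Theta(1/\sqrt{k\ln k})$ normalised excess of a $k$-vertex answer.
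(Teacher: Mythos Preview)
Your proof is correct and reaches the same conclusion, but the route differs from the paper's in two interesting ways.

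First, the order in which you deploy Lemma~\ref{lem:thinning exact} and Lemma~\ref{lem:max cut} is reversed. The paper begins by applying Lemma~\ref{lem:max cut} once to split off the residue $\ell\bmod k$ (taking $a=k+x$ with $x=\ell\bmod k$); if the $A$-side of that split wins it then invokes Lemma~\ref{lem:thinning exact} to pass from $a\in[k,2k)$ down to $k$. You instead use Lemma~\ref{lem:thinning exact} up front to round $\ell$ down to a multiple $m$ of $k$, and never need it again.

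Second, and more substantively, the iterative phase is organised differently. The paper runs a \emph{binary search}: at each step it applies Lemma~\ref{lem:max cut} with $\alpha=\tfrac12$ and $a,b$ each a multiple of $k$ near $n/2$, so $t$ halves at every step and the search terminates in $O(\log(\ell_0/k))=O(\log\log k)$ steps; the final excess is then $t_0\cdot 2^{-j}-1\gtrsim t_0 k/\ell_0$. You run a \emph{linear peel}: at each step you take $a=k$, $b=n-k$ and $\alpha=k/n$, which is exactly the choice that makes the normalised excess $t/n$ (nearly) invariant. This needs $O(\log k)$ steps rather than $O(\log\log k)$, but because each step loses only $1/(2n')$ in $t/n$, the cumulative loss is a harmonic sum of size $O(\ln\ln k)/k$, which is negligible. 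Both bookkeeping schemes land on the same $\Theta(\sqrt{k/\ln k})$ excess.

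Your approach is arguably cleaner: it avoids the two-case split in the paper's first step, and the invariant $t/n$ is a natural quantity to track. The paper's binary search, on the other hand, uses a uniform $\alpha=\tfrac12$ throughout and needs fewer iterations. One minor remark: your choice $\nu=40$ is tight (you need $\eta>38$ to keep $\eta_1>0$ when $P$ is close to $2$), so the ``small $k$ absorbed into constants'' clause is genuinely doing work; a slightly larger $\nu$ would give more room, as the paper's choice $\nu=160$ does.
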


\begin{proof}
Set  $\nu = 160$, $C=C(\nu)$ as defined according to Theorem~\ref{thm:KPPR}, and $D:=4C$. Also set $D':=1/\sqrt{D}$.

By Theorem~\ref{thm:KPPR}, since $C \cdot 2k \ln( 2k) \le 4C k \ln k = Dk \ln k \le |V(G)|$, we find $G$ or $\overline{G}$ has an induced subgraph 
$H$ on $\ell\ge 2k$ vertices with $\delta(H)\ge \frac12(\ell-1)+\nu\sqrt{\ell-1}$.

Let $x =\ell\bmod k$ (so $x\in \{0,\ldots,k-1\}$).
We can now apply Lemma~\ref{lem:max cut} to $H$ with $a=k+x$, $b=\ell-k-x$, $t = \nu\sqrt{\ell-1}$ and $\alpha=1/2$.
Suppose this gives us a subset $A \subseteq V(H)$ of size $a$ such that 
\[
\delta(H[A]) \ge \frac12a-1+ \frac{1}{2}\nu\sqrt{\ell-1} 
\ge \frac12a + \frac{1}{4}\nu\sqrt{\ell}
\ge \frac12a + \frac{1}{4}\nu\sqrt{a}.
\]
Then $k\le a< 2k$ and, so applying Lemma~\ref{lem:thinning exact} (with $P = a/k \in [1,2]$ and $\eta = \nu/4=40$) yields a subset $A' \subseteq A$ of size $k$ such that 
\[
\delta(H[A'])  \ge 
\frac{1}{2}k + \left( \frac{40}{\sqrt{P}} - 19\sqrt{P} \right) \sqrt{k}
\ge \frac{1}{2}k + \left( \frac{40}{\sqrt{2}} - 19\sqrt{2} \right) \sqrt{k}
\ge \frac12 k + \sqrt{2k},
\] 
which is more than required. 
In case Lemma~\ref{lem:max cut} does not produce such a set $A$, it gives instead a subset $B$ of size $b=\ell-k-x\equiv0\pmod k$  such that $\delta(H[B]) \ge \frac12(b - 1) + \frac{1}{2}\nu\sqrt{\ell-1} - \frac12$. 
We iteratively apply Lemma~\ref{lem:max cut} to $H[B]$ in a binary search to find a desired induced subgraph as follows.

Set $G_0 = H[B]$. Let $\ell_0 := |V(G_0)| = b$ (so that  $k \le \ell_0 \le Dk \ln 2k$ and  $\ell_0 \equiv 0 \pmod k$) and set $t_0 := \frac12 \nu\sqrt{\ell-1} - \frac{1}{2} \ge \frac12 \nu\sqrt{\ell_0-1} - \frac{1}{2}$ (so that $\delta(G_0) \ge \frac{1}{2}(\ell_0 - 1) + t_0$).
 Suppose that $G_i$ is given, where $G_i$ 
has $\ell_i$ vertices with $\ell_i \equiv0\pmod k$ and 
$\delta(G_i) \ge \frac{1}{2}(\ell_i - 1) + t_i$ for some 
number $t_i$. Set $a_i = \lfloor \ell_i/2k \rfloor k$ and 
$b_i = \lceil \ell_i/2k \rceil k$ so that $a_i + b_i = \ell_i$ 
and  $a_i \equiv b_i \equiv 0 \pmod k$. Apply Lemma~\ref
{lem:max cut} with $G=G_i$, $a=a_i$, $b=b_i$, $t=t_i$, and 
$\alpha = \frac{1}{2}$. Then we either obtain a set of vertices $A_i$ of size $a_i$ 
such that $\delta(G_i[A_i]) \ge \frac{1}{2}a_i - 1 + 
\frac{1}{2}t_i$, in which case we set $G_{i+1} := G_i[A_i] = 
H[A_i]$, or we obtain a set of vertices $B_i$ of size $b_i$ such that $\delta(G_i[B_i]) \ge \frac{1}{2}b_i - 1 + \frac{1}{2}t_i$, in which case we set 
$G_{i+1} := G_i[B_i] = H[B_i]$. Now set $\ell_{i+1} = |V(G_{i+1})|$ and note that $\ell_{i+1} \equiv 0 \pmod k$ and $\delta(G_{i+1}) \ge \frac{1}{2}(\ell_{i+1} - 1) + t_{i+1}$, where $t_{i+1} = \frac{1}{2}(t_i - 1)$. Note also that $\ell_{i+1}/k \le \lceil \ell_i/2k \rceil$.

In this way we obtain subgraphs $G_0, G_1, \ldots$ of $G_0=H[B]$ and we see from the recursion for $\ell_i$ above that if $\ell_i > k$ then $\ell_{i+1} < \ell_i$. Thus there exists some $j$ such that $\ell_j = k$ (since $\ell_i \equiv 0 \pmod k$ for all $i$) and an easy computation shows we can assume that $j \le \log_2(\ell_0/k) + 1$. The recursion for $t_i$ implies that $t_i \ge t_02^{-i} - 1$ so that 
\[
t_j \ge \frac{t_0k}{2\ell_0} - 1 
\ge \frac{\nu(\sqrt{\ell_0 - 1} - 1)k}{4 \ell_0} 
\ge \frac{k}{\sqrt{\ell_0}}  
\ge \frac{\sqrt{k}}{\sqrt{D\ln k}}  
= D'\sqrt{\frac{k}{\ln k}} 
\]
(where we used that $t_0 \ge \frac{1}{2}\nu\sqrt{\ell_0 - 1}-\frac12$, that $\ell_0 \geq k \geq 2$ with $\nu = 160$, and that $\ell_0 \le Dk \ln k$). Thus $G_j$ has $k$ vertices and minimum degree at least $\frac{1}{2}(k-1) + D'\sqrt{(k-1)/\ln k}$ and is an induced subgraph of $H[B]$ and hence of $G$ or $\overline{G}$.
\end{proof}

\section{Concluding remarks}

It is tempting to try using the greedy subroutine (Lemma~\ref{lem:max cut}) in a binary search on the output of Theorem~3(a) of \cite{KPPR14+}, but since we cannot control the order of this output graph, the search might require $O(\ln k)$ steps, which would destroy the minimum degree bounds.

Determination of the second-order term in the minimum degree threshold 
for polynomial to super-polynomial growth of the fixed quasi-Ramsey 
numbers is an open problem.
(The corresponding term for the variable quasi-Ramsey numbers was determined in~\cite{KPPR14+}.)
To pose the problem concretely, we recall notation of Erd\H{o}s and Pach. For $c \in [0,1]$ and $k \in \mathbb{N}$, let $R_c^*(k)$ 
be the least number $n$ such that for any graph $G=(V,E)$ on at least 
$n$ vertices, there exists $S \subseteq V$ with $|S|=k$ such that either $
\delta(G[S]) \ge c(k-1)$ or $\delta(\overline{G}[S]) \ge c(k-1)$.
Now consider 
$c = \frac{1}{2} + \varepsilon$ where $\varepsilon = \varepsilon(k)$ is a function of the size $k$ of the subset sought. 
By Theorem~\ref{thm:main,threshold} if $\varepsilon(k) = O(\sqrt{1/
(k-1)\ln k})$ then $R_c^*(k)$ is polynomial in $k$, and by Proposition~\ref{prop:standard} if $\varepsilon(k) = \omega(\sqrt{\ln \ln k/(k-1)})$ then 
$R_c^*(k)$ is superpolynomial in $k$.
Hence the choice of $\varepsilon$ for which we find a transition between polynomial and super-polynomial growth in $k$ of $R_c^*(k)$ is essentially determined to within a $\sqrt{\ln k \ln\ln k}$ factor of $\sqrt{1/(k-1)}$. What is it precisely?

Last, we remark that, in the above notation, our main result is that $R_{1/2}^*(k) \le Ck\ln k$ for some $C>0$, while Erd\H{o}s and Pach showed that $R_{1/2}^*(k) \ge C'k\ln k/\ln \ln k$ for some $C'>0$. They also asked if $R_{1/2}^*(k) \ge C'k\ln k$ for some $C'>0$. This question remains open.

\section*{Acknowledgement}
We thank Noga Alon for stimulating discussions during ICGT 2014 in Grenoble.
We are grateful to Joel Spencer for helpful comments about linear and hereditary discrepancy.
\bibliographystyle{abbrv}
\bibliography{erdospach2016}

\end{document}